\documentclass[3p,preprint]{elsarticle}

\usepackage{amsmath}
\usepackage{amsfonts}
\usepackage{amssymb,latexsym}
\usepackage{amsthm}
\usepackage{graphicx}
\usepackage{ulem}

\newtheorem{Remark}{Remark}[section]
\newtheorem{Corollary}{Corollary}[section]

\newtheorem{Proposition}{Proposition}[section]

\usepackage{color}
\usepackage{newlfont} 
\usepackage{subfigure}
\usepackage{verbatim}
\usepackage{rotating} 
\usepackage{multirow}
\usepackage{units}
\usepackage{bm}
\usepackage[english]{babel}
\usepackage[utf8]{inputenc}
\usepackage{algorithm}
\usepackage[noend]{algpseudocode} 
\usepackage{booktabs}
\usepackage{caption}
\usepackage{hyperref}
\usepackage{subfigure}
\usepackage[export]{adjustbox}

\journal{...}

\begin{document}

\begin{frontmatter}

\title{On modified anti-Gaussian rules for Jacobi weight functions}

\author[address-TS]{Eleonora Denich}
\ead{eleonora.denich@units.it}
\author[address-TS]{Paolo Novati}
\ead{novati@units.it}
\author[address-PD]{Alvise Sommariva\corref{corrauthor}}
\ead{alvise@math.unipd.it}

\cortext[corrauthor]{Corresponding author}

\address[address-TS]{Department of Mathematics, Informatics and Geosciences, University of Trieste, Via Valerio 12/1, 34127 Trieste, Italy}
\address[address-PD]{Department of Mathematics "Tullio Levi-Civita", University of Padova, Italy}

\begin{abstract}



Anti-Gaussian formulas represent an efficient tool for a dynamical estimation of the error of the underlying Gaussian rule.
When applied to the Jacobi weight function it is known that such formulas are not always internal. 
In this work we show how to overcome this problem by using the so called modified anti-Gaussian rule with \textcolor{black}{suitable parameter $\theta=\theta(n)$, that depends on the number $n$ of quadrature points of the Gaussian formula.}  
Next we study theoretically the asymptotic rate of convergence of the corresponding modified averaged Gaussian formulas. We conclude by showing the benefits of this approach via numerical experiments. All the Matlab codes used in this work are available as open-source software at {\cite{GITDNS}}.

\end{abstract}

\begin{keyword}
MSC[2020]  65D32
\end{keyword}

\end{frontmatter}

\section{Introduction}
In order to approximate
\begin{equation} \label{integrale I}
    I(f)=\int_a^b f(x) w(x) dx,
\end{equation}
where $w$ is a positive weight function on $[a,b]$ such that all the moments exist and are finite, one may use a $n$-point Gaussian quadrature rule 
$$
G^{(n)} (f)=\sum_{k=1}^n w_k f(x_k),
$$
with nodes $\{x_k\}_{k=1,\ldots,n}$ and weights $\{w_k\}_{k=1,\ldots,n}$.

In \cite{LAU96} the author introduces the $(n+1)$-point anti-Gaussian rule
$$
H^{(n+1)} (f)=\sum_{k=1}^{n+1} {\lambda}_k f({\xi}_k),
$$
that is designed to fulfill the equality 
$$
I(p) -H^{(n+1)} (p)=-(I(p)-G^{(n)}(p)), 
$$
for all $p \in {\mathbb{P}}_{2n+1}$, the set of polynomials of degree at most $2n+1$.
The above relation implies 
\begin{eqnarray}
I( p)=\frac{H^{(n+1)} (p)+G^{(n)}( p)}{2}, \quad p \in {\mathbb{P}}_{2n+1},
\end{eqnarray}
and, consequently, the error
\begin{equation} \label{errore gaussiana}
    R_G^{(n)} (f):=I(f)-G^{(n)}(f )
\end{equation}
is such that
\begin{eqnarray} 
R_G^{(n)} (p)&=& \frac{H^{(n+1)} (p)+G^{(n)}( p)}{2} - G^{(n)} (p) \label{formula 3} \\
&=&   \frac{H^{(n+1)} (p)-G^{(n)} (p)}{2}, \quad p \in {\mathbb{P}}_{2n+1} \label{relazione 2}
\end{eqnarray}
The $(2n+1)$-point formula
$$
A^{(2n+1)} = \frac{H^{(n+1)} +G^{(n)}}{2}
$$
is called averaged Gaussian rule.
Relation (\ref{relazione 2}) suggests that the averaged formula offers a strategy for a-posteriori error estimation of the error of the Gaussian rule.
As for the theoretical properties, it is proved that the anti-Gaussian formula $H^{(n+1)}$ has positive weights, all the nodes are interlaced by those of $G^{(n)}$, the inner nodes $\xi_2,\ldots, \xi_{n}$ belong to $[a,b]$.
One of the issues is that, depending on the weight function $w$, the anti-Gaussian rule may have $\xi_{1}$ and/or $\xi_{n+1}$ external to $[a,b]$.
In particular, in {\cite{LAU96}} the author observes that if we denote by  $\{\phi_k\}_{k=0,\ldots}$ the monic family of orthogonal polynomials with respect to $w$, and consider the three terms recursion
$$
\phi_{n+1}(x)=(x-a_n)\phi_n(x)-b_n \phi_{n-1}(x), \quad \phi_0(x)=1, \, \phi_{-1}(x)=0,
$$
where
$$
a_k=\frac{I(x \phi^2_k)}{I(\phi^2_k)}, \quad b_k=\frac{I(\phi^2_{k})}{I(\phi^2_{k-1})},
$$
then the anti-Gaussian rule $H^{(n+1)}$ is internal if and only if 
\begin{equation}\label{in_ineq}
\frac{\phi_{n+1}(a)}{\phi_{n-1}(a)} \geq b_n, \quad \frac{\phi_{n+1}(b)}{\phi_{n-1}(b)} \geq b_n.
\end{equation} 
In the same paper it is analyzed the case of Jacobi weight $$w_{\alpha,\beta}(x)=(1-x)^{\alpha} (1+x)^{\beta} , \quad \alpha, \beta > -1, \, x \in (-1,1),$$ proving and illustrating for what couple of exponents ($\alpha$, $\beta$) this happens.

Similarly to \cite{EHR02,CR03,HASC08,SP07,DFVI25}, we focus our attention on the modified anti-Gaussian rule $H^{(n+1)}_{\theta}$ that fulfills 
\begin{equation} \label{H_theta intro}
    H^{(n+1)}_{\theta}(p)=(1+\theta) I(p) - \theta G^{(n)}(p), \quad p \in {\mathbb{P}}_{2n+1},
\end{equation}
for some $\theta >0$, and the modified averaged formula
\begin{equation} \label{formula weighted averaged}
    A^{(2n+1)}_{\theta} = \frac{ {1}}{1+\theta}  H_{\theta}^{(n+1)}  +\frac{\theta}{1+\theta} G^{(n)}.
\end{equation}
For $\theta=1$ the above relations reduce to the classical case in which $H^{(n+1)}=H_1^{(n+1)}$, $A^{(2n+1)}=A_1^{(2n+1)}$.

Working with a weight function for which the anti-Gaussian rule $H^{(n+1)}_1$ is not internal, our basic aim is to define $\theta= \theta(n)$ in order that: 
\begin{enumerate}
    \item the corresponding modified anti-Gaussian and averaged rules are internal;
    \item the modified averaged rule is asymptotically faster than the underlying Gaussian rule \cite{DN26}.
\end{enumerate}
Through the paper we use the symbol $\sim$ to denote the asymptotic equality, while $\lesssim$ stands for less than or asymptotically equal to.

This work is organized as follows. 
In Section \ref{section 2} we introduce the modified anti-Gaussian and averaged rules, together with some theoretical properties.
Section \ref{section 3} deals with the choice of the parameter that ensures internality for the Jacobi weight function.
The rate of convergence of the modified averaged rule is studied in Section \ref{section 4} in the case of analytic functions. Finally, in Section \ref{section 5} we give the explicit a-priori error representation of the modified averaged rule, together with some numerical experiments. 




\section{Modified anti-Gaussian rule} \label{section 2}

We start our analysis by introducing the $(n+1)$-point modified anti-Gaussian rule
$$
H_{\theta}^{(n+1)} (f)=\sum_{k=1}^{n+1} {\lambda}_k f({\xi}_k), \quad \theta >0,
$$
where $\left\lbrace\lambda_k\right\rbrace_{k=1,\ldots,n+1}$, $\xi_1<\ldots<\xi_{n+1}$ are the weights and nodes, respectively.
In order to keep the notations as simple as possible, wo omit the dependency on $\theta$ when unnecessary.
The above formula is such that (cf. (\ref{H_theta intro}))
$$
I(p) -H_{\theta}^{(n+1)}( p)=-\theta \left(I( p)-G^{(n)} (p) \right), \quad p \in {\mathbb{P}}_{2n+1}. 
$$
For convenience, we prove some relevant results that are similar to those presented in {\cite{SP07}}. 
As in {\cite{LAU96}}, via a minor modification, one gets that, for $p \in {\mathbb{P}}_{2n+1}$,
\begin{eqnarray} \label{formula 4}
I( p)=A_{\theta}^{(2n+1)}(p)=\frac{ {1}}{1+\theta}  H_{\theta}^{(n+1)} (p) +\frac{\theta}{1+\theta} G^{(n)}( p),
\end{eqnarray}
that is, the integral is a convex combination of $H_{\theta}^{(n+1)} $ and $G^{(n)}$. We immediately get
$$
H^{(n+1)}_{\theta}(p)=(1+\theta) I(p) - \theta G^{(n)}(p) , \quad p \in {\mathbb{P}}_{2n+1},
$$
so that,  $H_{\theta}^{(n+1)}$ is the $(n+1)$-point Gaussian rule for the linear functional
$$
{\cal{L}}^{(\theta)}_{n} = (1+\theta) I - \theta G^{(n)}.
$$
Defining by $\{\pi_{j, \theta}\}_{j\geq 0}$ the set of monic orthogonal polynomials with respect to the functional  ${\cal{L}}^{(\theta)}_{n}$, we have
$$
\left\{
\begin{array}{l}
\pi_{n+1,\theta}(x)=(x-\alpha^{(\theta)}_n)\pi_{n,\theta}(x)-\beta^{(\theta)}_n \pi_{n-1,\theta}(x), \quad n \geq 0\\
\pi_{-1,\theta}(x)=0, \, \pi_{0,\theta}(x)=1,
\end{array}
\right.
$$
where
$$
\alpha^{(\theta)}_k=\frac{{\cal{L}}^{(\theta)}_{n}(x \pi^2_{k,\theta})}{{\cal{L}}^{(\theta)}_{n}(\pi^2_{k,\theta})}, \quad \beta^{(\theta)}_k=\frac{{\cal{L}}^{(\theta)}_{n}(\pi^2_{k,\theta})}{{\cal{L}}^{(\theta)}_{n}(\pi^2_{k-1,\theta})}.
$$
Since the Gaussian rule has algebraic degree of exactness $2n-1$, for $p \in {\mathbb{P}}_{2n-1}$
$$
 {\cal{L}}^{(\theta)}_{n}(p)=(1+\theta) I(p)-\theta G^{(n)}(p)= I(p).
$$
We observe that, for $j,k=0,\ldots,n$, $j \neq k$,
$$
{\cal{L}}^{(\theta)}_{n}(\phi_j \phi_k)= I(\phi_j \phi_k)=0,
$$
so that, by uniqueness of monic orthogonal polynomials, for $k=0,\ldots,n$, we have $\phi_k=\pi_{k,\theta}$ .
As consequence, for $k=0,\ldots,n-1$
$$
\alpha^{(\theta)}_k=\frac{{\cal{L}}^{(\theta)}_{n}(x \pi^2_{k,\theta})}{{\cal{L}}^{(\theta)}_{n}(\pi^2_{k,\theta})}=\frac{ I(x \pi^2_{k,\theta})}{ I(\pi^2_{k,\theta})}=\frac{ I(x \phi^2_k)}{ I(\phi^2_k)}=a_k,
$$

$$
\beta^{(\theta)}_k=\frac{{\cal{L}}^{(\theta)}_{n}(\pi^2_{k,\theta})}{{\cal{L}}^{(\theta)}_{n}(\pi^2_{k-1,\theta})}=\frac{ I( \pi^2_{k,\theta})}{ I(\pi^2_{k-1,\theta})}=\frac{ I(\phi^2_{k})}{ I(\phi^2_{k-1})}=b_k.
$$

Now, observe that, since $\phi_n = \pi_{n,\theta}$ and $x_1,\ldots,x_n$ are zeros of $\phi_n$, necessarily
$$
G^{(n)}(x \pi^2_{n,\theta})=\sum_{k=1}^n w_k x_k  \pi_{n,\theta}(x_k)=\sum_{k=1}^n w_k x_k  \phi_n(x_k)=0
$$
and thus
\begin{equation}\label{eq1}
{\cal{L}}^{(\theta)}_{n}(x \pi^2_{n,\theta})=(1+\theta) I(x \pi^2_{n,\theta})-\theta G^{(n)}(x \pi^2_{n,\theta}) =(1+\theta) I(x \pi^2_{n,\theta})=(1+\theta) I(x \phi^2_n).
\end{equation}
Next, for similar reasons,
$$
G^{(n)}(\pi^2_{n,\theta})=\sum_{k=1}^n w_k  \pi_{n,\theta}(x_k)=\sum_{k=1}^n w_k  \phi_n(x_k)=0,
$$
which implies that
\begin{equation}\label{eq2}
{\cal{L}}^{(\theta)}_{n}(\pi^2_{n,\theta})=(1+\theta) I(\pi^2_{n,\theta})-\theta G^{(n)}( \pi^2_{n,\theta}) =(1+\theta) I( \phi^2_n).
\end{equation}
In view of (\ref{eq1}) and (\ref{eq2})
$$
\alpha^{(\theta)}_n=\frac{{\cal{L}}^{(\theta)}_{n}(x \pi^2_{n,\theta})}{{\cal{L}}^{(\theta)}_{n}(\pi^2_{n,\theta})}=
\frac{(1+\theta) I(x \phi^2_n)}{(1+\theta) I( \phi^2_n)}=\frac{ I(x \phi^2_n)}{I( \phi^2_n)}=a_n,
$$
and, from (\ref{eq2}),
\begin{equation}\label{eq3}
\beta^{(\theta)}_n=\frac{{\cal{L}}^{(\theta)}_{n}(\pi^2_{n,\theta})}{{\cal{L}}^{(\theta)}_{n}(\pi^2_{n-1,\theta})}=
\frac{(1+\theta) I( \phi^2_n)}{I( \phi^2_{n-1})}=(1+\theta) b_n.
\end{equation}
The case $\theta=1$ gives the classical result of anti-Gaussian rules in which $\beta^{(1)}_n=2b_n$.
We also remark that, by choosing $\theta = \frac{b_{n+1}}{b_{n}}$ in (\ref{formula 4}) we obtain the so called {\it{generalized averaged Gaussian rule}} (see \cite{RSP21}).

At this point it is important to observe that, in view of these results,
\begin{align} \label{7bis}
    \pi_{n+1,\theta}(x)&=(x-a_n) \phi_n(x)-\left(1+\theta\right) b_n \phi_{n-1}(x) \notag \\
    &=\phi_{n+1}(x)-\theta b_n \phi_{n-1}(x).
\end{align}
The largest node $\xi_{n+1}$ of the rule $H^{(n+1)}_{\theta}$ belongs to the interval $[a,b]$ if and only if $\pi_{n+1,\theta}(b) \geq 0$, that is,
\begin{equation} \label{condizione}
    \phi_{n+1}(b)-\theta b_n \phi_{n-1}(b) \geq 0,
\end{equation}
or, alternatively,
\begin{equation}\label{eq4}
\frac{\phi_{n+1}(b)}   {\phi_{n-1}(b)}  \geq \theta b_n.
\end{equation}
By similar reasoning, the condition for $\xi_1 \in [a,b]$ is 
 \begin{equation}\label{eq5}
\frac{\phi_{n+1}(a)}   {\phi_{n-1}(a)}  \geq \theta b_n.
\end{equation}
A key point for our next analysis is that, in view of (\ref{eq4})-(\ref{eq5}), suitably selecting $\theta$, the modified anti-Gaussian rule is internal.
Moreover, independently of $\theta$, the modified anti-Gaussian rule still provides a reliable error estimate, since (cf. (\ref{errore gaussiana}))
\begin{equation} \label{stimatore}
E_G^{(n)}(p) = \frac{1}{1+\theta}(H_{\theta}^{n+1}(p)-G^{(n)}(p)), \quad p \in \mathbb{P}_{2n+1}. 
\end{equation}
With a proper choice of $\theta =\theta(n)$, we will see that the modified averaged rule $A_{\theta}^{(2n+1)}$ offers not only a way to automatically estimate the error of the Gaussian rule, but also exhibits a faster rate of convergence, similarly to what happens for  the classical averaged rule $A_1^{(2n+1)}$ (see \cite{DN26,RSP26}).


\begin{Remark}
By means of Cauchy interlace theorem, independently of $\theta$, the nodes $\xi_i$, $i = 1, 2,...,n + 1$, are all real, and are interlaced by those of
the Gaussian formula $G^{(n)}$, i.e.,
$$
\xi_1 < x_1 < \xi_2 < x_2 < \ldots < x_n < \xi_{n+1}.
$$
\end{Remark}

\section{Modified anti-Gaussian rule for Jacobi weight functions} \label{section 3}

In the case of Jacobi weight function $w_{\alpha,\beta}(x)=(1-x)^{\alpha} (1+x)^{\beta}$ with $\alpha$, $\beta > -1$, $x \in (-1,1)$, for the related monic orthogonal polynomials we have (see \cite[18.6.1]{NIST})
\begin{align}
    \phi_{n}^{(\alpha,\beta)}(1) &= \frac{2^n \binom{n+\alpha}{n}}{\binom{2n+\alpha+\beta}{n}}, \label{relA} \\
    \phi_{n}^{(\alpha,\beta)}(-1) &=(-1)^n \frac{2^n \binom{n+\beta}{n}}{\binom{2n+\alpha+\beta}{n}}, \label{relB}
\end{align}
and, moreover,
\begin{equation} \label{relC}
    b_n = \frac{4n(n+\alpha)(n+\beta)(n+\alpha+\beta)}{(2n+\alpha+\beta-1)(2n+\alpha+\beta)^2(2n+\alpha+\beta+1)}.
\end{equation}
Defining the function 
\begin{equation} \label{funzione h}
h(n,x,y) = 1+\frac{(2x+1)n(n+x+y+1)+\frac{1}{2} (x+1)(x+y)(x+y+1)}{n(n+y)(n+\frac{x+y}{2}+1)},
\end{equation}
by (\ref{relA})-(\ref{relB})-(\ref{relC}) we obtain, after some computations,
\begin{align*}
    \frac{\phi_{n+1}^{(\alpha,\beta)}(1)}{b_n \phi_{n-1}^{(\alpha,\beta)}(1)}&=h(n,\alpha,\beta), \\
    \frac{\phi_{n+1}^{(\alpha,\beta)}(-1)}{b_n \phi_{n-1}^{(\alpha,\beta)}(-1)}&=h(n,\beta,\alpha).
\end{align*}
Thus, by (\ref{eq4})-(\ref{eq5}) the modified anti-Gaussian rule $H_{\theta}^{(n)}$ is internal if and only if
\begin{equation} \label{eq7j}
    \theta \leq \theta_{\mbox{\footnotesize{max}}}(n,\alpha,\beta):=\min \left\lbrace {h(n,\alpha,\beta)}, {h(n,\beta,\alpha)} \right\rbrace.
\end{equation}
From (\ref{funzione h}), we observe that $\theta_{\mbox{\footnotesize{max}}}(n,\alpha,\beta) \rightarrow 1$ as $n \rightarrow +\infty$.

In the case of ultraspherical (Gegenbauer) polynomials, that corresponds to the choice $\alpha=\beta=\gamma-1/2$ with $\gamma > -1/2$, we have, in particular,  that the nodes $\xi_1, \xi_{n+1} \in [-1,1]$ if and only if 
\begin{equation}\label{eq7}
\theta\leq \theta_{\mbox{\footnotesize{max}}}(n,\gamma):=1+  \frac{2\gamma n(n+2\gamma) +(\gamma+1/2)(2\gamma-1)\gamma }{n(n+\gamma-1/2)(n+\gamma+1/2)} 
\end{equation}
In view of (\ref{eq7}), we have $\theta_{\mbox{\footnotesize{max}}}(n,\gamma) \rightarrow 1$ as $n \rightarrow +\infty$, from below when $\gamma < 0$ and from above when $\gamma > 0$ (see Figure \ref{jacobi_thetamin}).
Note that, for $\gamma \geq 0$, $\theta_{\mbox{\footnotesize{max}}}(n,\gamma) \geq 1$, and the anti-Gaussian formula is already known to be internal, so that the use of the modified rule is not necessary.





Moreover, by (\ref{condizione}), taking $\theta=\theta_{\mbox{\footnotesize{max}}}(n,\gamma)$ the corresponding modified anti-Gaussian rule is just the Gegenbauer-Lobatto formula, since $\xi_1=-1$, $\xi_{n+1}=1$ and, by construction, the rule has degree $2n-1$. This property was already stated in \cite[Th.3.6]{CR03}.



\begin{figure}[ht!]
\centering
\includegraphics[scale=0.5,valign=c]{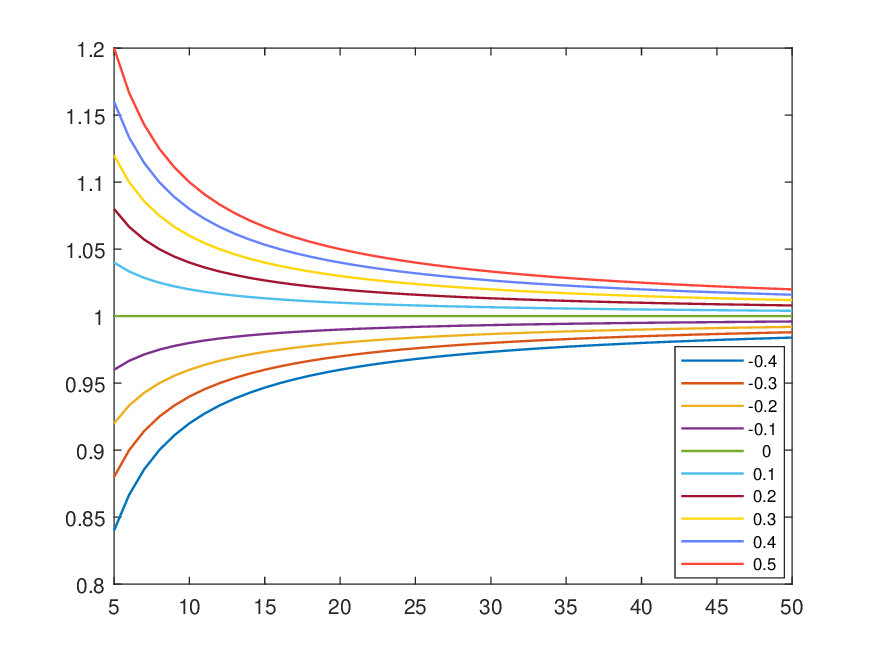} 
\caption{{\color{black}The value $\theta_{\mbox{\footnotesize{max}}}(n,\gamma)$ for $n=5,6\ldots,50$ and $\gamma=-0.5,-0.4,\ldots,0.4,0.5$. We observe that for $\gamma=0$ and $\gamma=0.5$, we consider respectively the Chebyshev and the Legendre weight function and it is known that the anti-Gaussian rule has internal nodes, while this does not hold true for $\gamma < 0$.}}
\label{jacobi_thetamin}
\end{figure}

\section{Asymptotic rate of convergence of the modified averaged rule} \label{section 4}

In this section we investigate the theoretical rate of convergence of the modified anti-Gaussian and averaged formulas, when defining $\theta = \theta_{\mbox{\footnotesize{max}}}(n,\alpha,\beta)$.

Let 
$$
R_{H_{\theta}}^{(n+1)} (f) = I(f) -H_{\theta}^{(n+1)}(f)
$$
be the remainder term of the modified anti-Gaussian rule.
If $H_{\theta}^{(n+1)}$ is internal, $R_{H_{\theta}}^{(n+1)}$ admits the Cauchy integral representation
$$
R_{H_{\theta}}^{(n+1)}(f)= \frac{1}{2\pi i} \int_{\Gamma}K^{(n+1)}_{\theta}(z) f(z) dz,
$$
where the contour $\Gamma$ must contain the interval $[-1,1]$ and the function $f$ has to be analytic in its interior and continuous on the boundary (see \cite{BAR61}).
The kernel $K^{(n+1)}_{H_{\theta}}$ is defined as 
\begin{equation} \label{definizione kernel theta}
    K^{(n+1)}_{H_{\theta}}(z) = \frac{q_{n+1,\theta}(z)}{\pi_{n+1,\theta}(z)}, \quad z \notin [-1,1],
\end{equation}
where $\pi_{n+1,\theta}$ is as in (\ref{7bis}), and
$$
q_{n+1,\theta}(z) = \int_{-1}^1 \frac{\pi_{n+1,\theta}(z)}{z-x}w_{\alpha,\beta}(x) dx
$$
is known as associated function.
For the remainder term $R_G^{(n)}(f) = I(f)-G^{(n)}(f)$ of the Gaussian rule, we have the analog integral representation 
$$
R_{n}^{G}(f)= \frac{1}{2\pi i} \int_{\Gamma}K^{(n)}_{G}(z) f(z) dz,
$$
with
$$
K^{(n)}_{G}(z) = \frac{q_{n}(z)}{\phi_{n}(z)}, \; q_n(z)=\int_{-1}^1 \frac{\phi_{n}(z)}{z-x}w_{\alpha,\beta}(x) dx, \; z \notin [-1,1].
$$
Therefore, since by (\ref{7bis}) we have that 
$\pi_{n+1,\theta}(x)=\phi_{n+1}(x)-\theta b_n \phi_{n-1}(x)$, one obtains 
\begin{eqnarray}
q_{n+1,\theta}(z) &=& \int_{-1}^1 \frac{\pi_{n+1,\theta}(z)}{z-x}w_{\alpha,\beta}(x) dx \nonumber \\
&=& \int_{-1}^1 \frac{\phi_{n+1}(x)-\theta b_n \phi_{n-1}(x)}{z-x}w_{\alpha,\beta}(x) dx \nonumber \\
&=& q_{n+1}(z)-{\theta}b_n q_{n-1}(z),
\end{eqnarray}
and the kernel of the modified anti-Gaussian rule can be written in terms of monic orthogonal polynomials and associated functions of the Gaussian formula as (cf. (\ref{definizione kernel theta}))
\begin{equation} \label{K_theta 25 bis}
    K^{(n+1)}_{H_{\theta}} (z)= \frac{q_{n+1}(z)-{\theta}b_n q_{n-1}(z)}{\phi_{n+1}(z)-{\theta}b_n \phi_{n-1}(z)}, \quad z \notin [-1,1].
\end{equation}
As for the modified averaged formula
\begin{equation} \label{formula weighted averaged REP}
    A^{(2n+1)}_{\theta} = \frac{ {1}}{1+\theta}  H_{\theta}^{(n+1)}  +\frac{\theta}{1+\theta} G^{(n)}.
\end{equation}
{\textcolor{black}{introducing the remainder $R^{(2n+1)}_{A_{\theta}}(f)  = I(f)-A^{(2n+1)}_{\theta}(f)$}}, we still have the Cauchy integral representation 
\begin{equation} \label{rappr int aver}
 R^{(2n+1)}_{A_{\theta}}(f) = \frac{1}{2 \pi i} \int_{\Gamma} K_{A_{\theta}}^{(2n+1)}(z) f(z) dz,   
\end{equation}
where the kernel $ K_{A_{\theta}}^{(2n+1)}$ can be written as
$$
K_{A_{\theta}}^{(2n+1)} = \frac{1}{1+\theta} K_{H_{\theta}}^{(n+1)}(z)+\frac{\theta}{1+\theta} K_{G}^{(n)}(z).
$$
In \cite{DN26}, starting from relation (\ref{K_theta 25 bis}) for $K_{H_1}^{(n+1)}$ and by exploiting for $n \rightarrow +\infty$ suitable asymptotics for $\phi_k$ and $q_k$, the authors derived asymptotic estimates for the remainder terms of the anti-Gaussian and averaged formulas. 
In the following proposition we show how these results can be extended in the case of the modified rules.
First of all, for $n\rightarrow + \infty$, the kernel of the Gauss-Jacobi formula has the asymptotic (see \cite{BAR61},\cite[Sect. 1.13]{DR84},\cite{DE72})
\begin{equation} \label{K_G asintotico}
    K_G^{(n)}(z) \sim 2 \pi w_{\alpha,\beta}(z) \rho(z)^{-2n-\alpha-\beta-1},
\end{equation}
with $\rho(z)=z+\sqrt{z^2-1}$ (the sign of the square root must be taken such that $\vert \rho(z) \vert >1$).

\begin{Proposition} 
    Let $\alpha,\beta$ be such that $H_1^{(n+1)}$ is not internal and $\theta(n)=\theta_{\mbox{\footnotesize{max}}}(n,\alpha,\beta)$ as in (\ref{eq7j}). For $z$ not in the neighborhood of $[-1,1]$ and $n \rightarrow + \infty$ it holds
    \begin{align}
        K_{H_{\theta(n)}}^{(n+1)}(z) &\sim -K_G^{(n)}(z) \left(1-g(n) \frac{1+\rho(z)^2}{1-\rho(z)^2} \right), \label{K_theta asintotico prop} \\
         K_{A_{\theta(n)}}^{(2n+1)}(z) & \sim K_G^{(n)}(z) \frac{g(n)}{1-\rho(z)^2}, \label{K_A_theta asintotico prop}
    \end{align}
    where $g(n) = \theta(n)-1$ is such that
    \begin{equation} \label{g(n)}
        g(n) \sim
        \begin{cases}
            \frac{2\alpha+1}{n}, \quad \alpha \leq \beta \\
            \frac{2\beta+1}{n}, \quad \alpha \geq \beta
        \end{cases}.
    \end{equation}
\end{Proposition}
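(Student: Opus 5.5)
The plan is to start from the representation (\ref{K_theta 25 bis}) and insert the classical outer asymptotics for the monic Jacobi polynomials and their associated functions, as was done in \cite{DN26} for $\theta=1$, keeping track of first-order terms in $g(n)=\theta(n)-1$.

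For $z$ away from $[-1,1]$ and $k\to\infty$ one has $\phi_{k}(z)\sim C_k\,\rho(z)^{k}$ and $q_k(z)\sim D_k\,\rho(z)^{-k-1}$, with $b_n\to 1/4$. The ratios of the constants satisfy $C_{n+1}/C_{n-1}\sim 1/4$ and $D_{n+1}/D_{n-1}\sim 1/4$, up to $O(1/n)$ corrections. Only the correction of order $g(n)\sim c/n$ has to survive, so I have to check that the $O(1/n)$ corrections of the constants cancel between numerator and denominator. I will write $\phi_{n+1}=\phi_{n-1}\,r_n(z)$ with $r_n(z)\sim b_n\rho^2$, and $q_{n+1}=q_{n-1}\,s_n(z)$ with $s_n(z)\sim b_n\rho^{-2}$. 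Here the ratios should be exact enough, as in \cite{DN26}, that the classical case is recovered. Then
$$
K_{H_\theta}^{(n+1)}(z)=\frac{q_{n-1}}{\phi_{n-1}}\cdot\frac{s_n-\theta b_n}{r_n-\theta b_n}\sim \frac{q_{n-1}}{\phi_{n-1}}\cdot\frac{\rho^{-2}-\theta}{\rho^{2}-\theta}.
$$
Using $q_{n-1}/\phi_{n-1}=K_G^{(n-1)}\sim K_G^{(n)}\rho^{2}$ from (\ref{K_G asintotico}) gives
$$
K_{H_\theta}^{(n+1)}\sim K_G^{(n)}\,\frac{1-\theta\rho^2}{\rho^2-\theta}.
$$
With $\theta=1$ this equals $-K_G^{(n)}$, which is consistent with \cite{DN26}. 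Setting $\theta=1+g$ and expanding to first order in $g$:
$$
\frac{1-\rho^2-g\rho^2}{\rho^2-1-g}=-\frac{1-\rho^2-g\rho^2}{1-\rho^2+g}\approx -\Bigl(1-g\frac{\rho^2}{1-\rho^2}-g\frac{1}{1-\rho^2}\Bigr)=-\Bigl(1-g\frac{1+\rho^2}{1-\rho^2}\Bigr).
$$
This is (\ref{K_theta asintotico prop}).

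For the averaged kernel I combine $\frac{1}{1+\theta}K_H+\frac{\theta}{1+\theta}K_G$. This gives
$$
K_G^{(n)}\Bigl[\tfrac{\theta-1}{1+\theta}+\tfrac{g}{1+\theta}\tfrac{1+\rho^2}{1-\rho^2}\Bigr]\sim K_G^{(n)}\tfrac{g}{2}\Bigl(1+\tfrac{1+\rho^2}{1-\rho^2}\Bigr)=K_G^{(n)}\frac{g}{1-\rho^2},
$$
which is (\ref{K_A_theta asintotico prop}).

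For (\ref{g(n)}), I read off from (\ref{funzione h}) that $h(n,x,y)-1\sim (2x+1)n^2/n^3=(2x+1)/n$. The minimum in (\ref{eq7j}) then selects the smaller of $2\alpha+1$ and $2\beta+1$. Under the hypothesis that $H_1$ is not internal, at least one of these is negative, so $g(n)<0$ but $g(n)=O(1/n)$.

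The main obstacle is justifying that the $O(1/n)$ errors in the asymptotics of $\phi_{n+1}/\phi_{n-1}$ and $q_{n+1}/q_{n-1}$ do not contaminate the $g(n)$ term. This is a genuine issue, because at $\theta=1$ the leading term of $K_A$ cancels exactly. I would first try to use exact relations rather than crude asymptotics. Applying the recurrence $\phi_{n+1}=(z-a_n)\phi_n-b_n\phi_{n-1}$, and the same recurrence for $q$, reduces everything to $\phi_n/\phi_{n-1}$ and $q_n/q_{n-1}$. I would then invoke the known expansions for Jacobi ratios with relative error $O(n^{-2})$ (Darboux/Szegő type, as in \cite{DN26}). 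If only $O(1/n)$ accuracy is available, I would instead argue via \cite{DN26} that the classical averaged kernel $K_{A_1}$ is $o(K_G^{(n)}/n)$. Since $K_{A_\theta}-K_{A_1}$ is exactly linear-fractional in $\theta$, its first-order term then gives the result.
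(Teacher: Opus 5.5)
Your proposal is correct and follows the paper's route. Both arguments substitute large-$n$ asymptotics into (\ref{K_theta 25 bis}) to get a linear-fractional expression in $\theta\rho(z)^2$, set $\theta=1+g(n)$, expand to first order in $g(n)$, and then form the convex combination for $K_{A_{\theta}}^{(2n+1)}$, with $g(n)$ read off from (\ref{funzione h}). The one difference is how the $o(1/n)$ control of the $n$-dependent corrections is obtained: the paper takes the explicit coefficients $\eta,\psi,\tau$ from \cite[Prop.4]{DN26} and asserts (\ref{F1}), whereas you use the three-term recurrence shared by $\phi_k$ and $q_k$ to reduce everything to $\phi_n/\phi_{n-1}$ and $q_n/q_{n-1}$, whose $\mathcal{O}(n^{-2})$ relative accuracy is a valid and more transparent way to get the same step.
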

\begin{proof}
    Following \cite[Prop.4]{DN26}, for $n \rightarrow +\infty$ the kernel of the modified anti-Gaussian rule can be written as
    \begin{equation*}
        K_{\theta(n)}^{(n+1)}(z) \sim -K_G^{(n)}(z) \frac{\eta(n)-{\theta(n)}\psi(n)\rho(z)^2}{\theta(n)\tau(n)-\psi(n)\rho(z)^2},
    \end{equation*}
    where
    \begin{align*}
        \eta(n) &= \frac{4(n+\alpha+1)(n+\beta+1)}{(2n+\alpha+\beta+2)(2n+\alpha+\beta+3)}, \\
        \psi(n) &= \frac{(2n+\alpha+\beta+1)(2n+\alpha+\beta+2)}{4(n+1)(n+\alpha+\beta+1)}, \\
        \tau(n) &= \frac{4n(n+\alpha)(n+\beta)(2n+\alpha+\beta+2)(n+\alpha+\beta)}{(2n+\alpha+\beta)(n+1)(n+\alpha+\beta+1)(2n+\alpha+\beta+1)(2n+\alpha+\beta)},
    \end{align*}
    and $\rho(z)=z+\sqrt{z^2-1}$. 
    Since
    \begin{equation} \label{theta max asintotico}
        \theta(n) = 1+g(n), 
    \end{equation}
    we obtain
    \begin{equation} \label{F}
        K_{\theta(n)}^{(n+1)}(z) \sim -K_G^{(n)}(z) \left(1+\frac{\eta(n)-\tau(n)}{\tau(n)(1+g(n))-\psi(n)\rho(z)^2}-\frac{g(n)(\psi(n)\rho(z)^2+\tau(n))}{\tau(n)(1+g(n))-\psi(n)\rho(z)^2} \right).
    \end{equation}
    By (\ref{funzione h})-(\ref{eq7j}), we have that $g(n) \rightarrow 0$, $n \rightarrow + \infty$, as in (\ref{g(n)}).
    \textcolor{black}{With some effort, it can be seen that}
    \begin{equation} \label{F1}
    \frac{\eta(n)-\tau(n)}{\tau(n)(1+g(n))-\psi(n)\rho(z)^2} = \mathcal{O}\left( \frac{1}{n^3} \right),  \quad  n \rightarrow + \infty,
    \end{equation}
    and, therefore, 
    $$
K_{\theta(n)}^{(n+1)}(z) \sim -K_G^{(n)} \left( 1-g(n) \frac{\psi(n)\rho(z)^2+\tau(n)}{\tau(n)(1+g(n))-\psi(n)\rho(z)^2} \right).
    $$
    Then, by using the asymptotics
    $$
    \psi(n) \sim \tau(n) \sim 1+g(n) \sim 1, \quad n \rightarrow + \infty,
    $$
    we get
$$K_{\theta(n)}^{(n+1)}(z) \sim -K_G^{(n)}(z) \left(1-g(n) \frac{1+\rho(z)^2}{1-\rho(z)^2} \right) $$
    
    \noindent that is, (\ref{K_theta asintotico prop}) holds.
    
    As for the kernel of the modified averaged rule, by means of (\ref{K_theta asintotico prop}) and (\ref{theta max asintotico}), we can write
    \begin{align*}
        K_{A_{\theta(n)}}^{(2n+1)}(z) &= \frac{\theta(n)}{1+\theta(n)}K_G^{(n)}(z)+\frac{1}{1+\theta(n)}K_{\theta(n)}^{(n+1)}(z) \\
        &\sim K_G^{(n)}(z) \left( \frac{\theta(n)}{1+\theta(n)}-\frac{1}{1+\theta(n)}+\frac{g(n)}{1+\theta(n)}\frac{1+\rho(z)^2}{1-\rho(z)^2} \right) \\
        &\sim K_G^{(n)}(z) \, \frac{ \, g(n)}{1+\theta(n)} \left(1+     
        \frac{1+\rho(z)^2}{1-\rho(z)^2} \right)  
        \sim K_G^{(n)}(z) \frac{ \, g(n)}{2+g(n)} 
        \frac{2}{1-\rho(z)^2} \\
        & = K_G^{(n)}(z)  \frac{g(n)}{1-\rho(z)^2}.
    \end{align*}
\end{proof}



\begin{Corollary}
    Assuming $-1/2<\gamma<0$ and defining $\theta(n)=\theta_{\mbox{\footnotesize{max}}}(n,\gamma)$, for z not in the neighborhood of $[-1,1]$ and $n \rightarrow + \infty$, it holds
    \begin{align*}
        K_{\theta(n)}^{(n+1)}(z) &\sim -K_G^{(n)}(z) \left(1-\frac{2 \gamma}{n} \frac{1+\rho(z)^2}{1-\rho(z)^2} \right),  \\
         K_{A_{\theta(n)}}^{(2n+1)}(z) & \sim 2K_G^{(n)}(z) \frac{\gamma}{n(1-\rho(z)^2)}. 
    \end{align*}
\end{Corollary}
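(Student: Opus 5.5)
The plan is to specialize the preceding Proposition to the symmetric case $\alpha=\beta=\gamma-1/2$. Two things must be checked: that the hypothesis of the Proposition holds, and that $g(n)\sim 2\gamma/n$.

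First, the hypothesis. For $\alpha=\beta$ we have $h(n,\alpha,\beta)=h(n,\beta,\alpha)$, so $\theta_{\mbox{\footnotesize{max}}}(n,\alpha,\beta)$ reduces to $\theta_{\mbox{\footnotesize{max}}}(n,\gamma)$ in (\ref{eq7}). We need $H_1^{(n+1)}$ to be non-internal, i.e. $\theta_{\mbox{\footnotesize{max}}}(n,\gamma)<1$ for $n$ large. From (\ref{eq7}) the numerator of $\theta_{\mbox{\footnotesize{max}}}(n,\gamma)-1$ is $2\gamma n(n+2\gamma)+(\gamma+1/2)(2\gamma-1)\gamma$. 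Its leading term $2\gamma n^2$ is negative when $-1/2<\gamma<0$. The lower-order term $(\gamma+1/2)(2\gamma-1)\gamma$ is in fact positive for such $\gamma$, but it is $O(1)$ and therefore dominated by $2\gamma n^2$. Since the denominator $n(n+\gamma-1/2)(n+\gamma+1/2)$ is positive for $n\ge1$, we get $\theta(n)<1$ for all sufficiently large $n$, which is all an asymptotic statement requires. Hence the Proposition applies with $\theta(n)=\theta_{\mbox{\footnotesize{max}}}(n,\gamma)$.

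Second, the asymptotics of $g(n)$. Directly from (\ref{eq7}),
$$
g(n)=\theta(n)-1=\frac{2\gamma n(n+2\gamma)+(\gamma+1/2)(2\gamma-1)\gamma}{n(n+\gamma-1/2)(n+\gamma+1/2)}\sim\frac{2\gamma n^2}{n^3}=\frac{2\gamma}{n}.
$$
This agrees with (\ref{g(n)}), since $2\alpha+1=2\gamma$. Because $\gamma\neq 0$, the limit $n g(n)\to 2\gamma$ is nonzero, so replacing $g(n)$ by $2\gamma/n$ inside an asymptotic equivalence is legitimate.

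Substituting into (\ref{K_A_theta asintotico prop}) gives
$$
K_{A_{\theta(n)}}^{(2n+1)}(z)\sim K_G^{(n)}(z)\,\frac{2\gamma}{n(1-\rho(z)^2)},
$$
which is the second claim.

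Substituting into (\ref{K_theta asintotico prop}) gives the first claim, but one subtlety needs attention. The factor $1-g(n)\frac{1+\rho^2}{1-\rho^2}$ tends to $1$, so writing $2\gamma/n$ in place of $g(n)$ is only meaningful at the level of the first-order correction. Any error in $g(n)$ is $O(n^{-2})$, so I would state the result as holding up to $O(n^{-2})$ terms, exactly as the Proposition is read. This interpretive point is the only real obstacle; the rest is direct substitution.
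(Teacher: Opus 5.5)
Your proposal is correct and matches the paper, which obtains the Corollary simply by specializing the Proposition to $\alpha=\beta=\gamma-1/2$, so that $g(n)\sim(2\alpha+1)/n=2\gamma/n$. The ``obstacle'' you flag disappears entirely: since $(\gamma+1/2)(2\gamma-1)\gamma=2\gamma(\gamma^2-1/4)$, the numerator in (\ref{eq7}) equals $2\gamma(n+\gamma-1/2)(n+\gamma+1/2)$, so $\theta_{\mbox{\footnotesize{max}}}(n,\gamma)=1+2\gamma/n$ exactly; the substitution into both formulas of the Proposition is therefore an identity, not an approximation, and $\theta(n)<1$ holds for every $n\ge 1$, not just for large $n$.
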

\begin{Remark}
    Formula (\ref{F}) offers a complete view around the speed up of the averaged rule. Indeed, when $H_1^{(n+1)}$ is internal, $g(n)=0$ and the speed up is given by (\ref{F1}) (see \cite{DN26}). 
    On the other hand, by taking $\theta<1$ constant, $g(n)$ is also constant and $K_{A_{\theta}}^{(2n+1)}\sim K_G^{(n)}$ (no speed up).
\end{Remark}

\section{Error representation and numerical experiments} \label{section 5}

Before presenting some numerical experiments, we show how to exploit the results of the previous section.
{\textcolor{black}{While in \cite{DN26} the authors investigated the case of the classical anti-Gaussian formula for Jacobi weights when all their nodes are internal, here we extend the study to modified average rules even when $\theta_{\mbox{\footnotesize{max}}}(n,\alpha,\beta) < 1$.
Typical instances are ultraspherical weight functions with $\gamma \in (-1/2,0)$.
}}

For simplicity, in what follows $\theta = \theta(n) = \theta_{\mbox{\footnotesize{max}}}(n,\alpha,\beta)$.
We assume $f$ in (\ref{integrale I}) to be a meromorphic function whose restriction to the real numbers is real valued.
With some effort, the analysis given below can be extended to more general functions \textcolor{black}{see e.g. \cite{DN25})}.
The only necessary hypothesis is that $f$ must be analytic in an open subset containing $[-1,1]$.
Now, we introduce confocal ellipses of type
$$
\Gamma_r = \left\lbrace z \in \mathbb{C} \colon z = \frac{1}{2} \left( re^{i \varphi}+\frac{1}{re^{i \varphi}} \right), \; 0 \leq \varphi < 2 \pi \right\rbrace, \quad r>1,
$$
having foci at $\pm 1$ and sum of semiaxis equal to $r$.
We notice that 
\begin{equation} \label{proprietà R}
\vert \rho(z) \vert = \vert z +\sqrt{z^2-1} \vert = r, \quad {\mbox{for}} \; z \in \Gamma_r.
\end{equation}
Suppose that $f$ has no singularity within or on a particular ellipse $\Gamma_{{R}}$, except for a pair of simple poles $z_0$ and its conjugate $\overline{z_0}$, belonging to $\Gamma_{R_0}$, $R_0 < {R}$.
Denoting by $C_1$ and $C_2$ two arbitrary small circles surrounding the two poles, as contour in (\ref{rappr int aver}) we take $\Gamma = \Gamma_{{R}} \cup C_1 \cup C_2$.
By running $\Gamma$ in counterclockwise direction, we have
\begin{eqnarray}
R^{(2n+1)}_{A_{\theta}}(f) 
&=& \frac{1}{2\pi i} \left( \int_{\Gamma_{{R}}} K_{A_{\theta}}^{(2n+1)}(z) f(z) dz - \int_{C_1 \cup C_2} K_{A_{\theta}}^{(2n+1)}(z) f(z) dz \right).
\end{eqnarray}
By using the asymptotics (\ref{K_G asintotico})-(\ref{K_A_theta asintotico prop}) and (\ref{proprietà R}), for $n \rightarrow + \infty$ the contribution on $\Gamma_{{R}}$ is asymptotically bounded by
\begin{align*}
    \left\vert \frac{1}{2\pi i}  \int_{\Gamma_{{R}}} K_{A_{\theta}}^{(2n+1)}(z) f(z) dz \right\vert &\sim \left\vert \frac{1}{2 \pi i} \int_{\Gamma_{{R}}}K_G^{(n)}(z) \frac{g(n)}{1-\rho(z)^2}f(z) dz \right\vert \\
    &\lesssim \vert g(n) \vert R^{-2n-\alpha-\beta-1} \int_{\Gamma_{{R}}} \left\vert \frac{w_{\alpha,\beta}(z)}{1-\rho(z)^2}f(z) dz \right\vert.
\end{align*}
Then, by using the residue theorem, \textcolor{black}{being $f$ a real valued function in $(-1,1)$}, from (\ref{K_A_theta asintotico prop})-(\ref{K_G asintotico}) we get
\begin{align*}
    \frac{1}{2\pi i} \int_{C_1 \cup C_2}  K_{A_{\theta}}^{(2n+1)}(z) f(z) dz &={\mbox Res}(f,z_0)  K_{A_{\theta}}^{(2n+1)}(z_0)+{\mbox Res}(f,\overline{z_0})  K_{A_{\theta}}^{(2n+1)}(\overline{z_0}) \\
    &= 2 \Re \left\lbrace {\mbox Res}(f,z_0) K_{A_{\theta}}^{(2n+1)}(z_0)  \right\rbrace \\
    &\sim 2 \Re \left\lbrace {\mbox Res}(f,z_0)K_G^{(n)}(z_0) \frac{g(n)}{1-\rho({\textcolor{black}{z_0}})^2} \right\rbrace \\
    &\sim 2 \Re \left\lbrace 2 \pi {\mbox Res}(f,z_0) \frac{g(n)}{1-\rho({\textcolor{black}{z_0}})^2}w_{\alpha,\beta}(z_0) \rho(z_0)^{-2n-\alpha-\beta-1} \right\rbrace,
\end{align*}
where the symbol $\Re(\cdot)$ denotes the real part and ${\mbox Res}(f,z_0)$ is the residue of $f$ at $z_0$.



Now, since $R_0<{R}$, we have (cf. (\ref{proprietà R}))
$$
\vert \rho(z_0) \vert^{-2n-\alpha-\beta-1}=R_0^{-2n-\alpha-\beta-1} > {R}^{-2n-\alpha-\beta-1},
$$
so that the contribution on $\Gamma_{{R}}$ can be neglected. Therefore, we consider the asymptotic bound
\begin{equation} \label{bound errore}
    \left\vert R^{(2n+1)}_{A_{\theta}}(f) \right\vert \lesssim 4 \pi \left\vert {\mbox{Res}}(f,z_0) w_{\alpha,\beta}(z_0) \frac{1+\rho({\textcolor{black}{z_0}})^2}{1-\rho({\textcolor{black}{z_0}})^2} g(n) \right\vert R_0^{-(2n+\alpha+\beta+1)}, \quad n \rightarrow + \infty.
\end{equation} 
For the particular case of the Gegenbauer weight function, the error is asymptotically bounded by formula (\ref{bound errore}) with $g(n) \sim -\frac{2 \gamma}{n}$ and $\alpha=\beta=\gamma-\frac{1}{2}$.

In the experiments we consider the following meromorphic functions:
\begin{enumerate}
    \item $f_1(x)= \frac{1}{1+c^2x^2}$, $c \in \mathbb{R}$, with poles $\pm \frac{i}{c}$ and residues $\frac{1}{2ic}$;
    \item $f_2(x)=\frac{1}{1+e^{dx+1}}$, $d \in \mathbb{R}$, with poles $z_k=\frac{1}{d}(-1+i(2k+1)\pi)$, $k \in \mathbb{Z}$, and residues $-\frac{1}{d}$.
\end{enumerate}

\textcolor{black}{Referring to formula (\ref{bound errore}), we have that $R_0=\vert\rho(z_0)\vert$, with $z_0=\frac{i}{c}$ for the function $f_1$, and $z_0=\frac{1}{d}(-1+i\pi)$ in the case of $f_2$.}
In Figures \ref{fig1_weighted} and \ref{fig2_weighted} we plot the error of the Gaussian rule and of the modified averaged rule (with respect to a reference solution). 
Moreover, we also show the a-posteriori error estimate for the Gaussian quadrature, given by (see (\ref{stimatore}))
\begin{equation} \label{stima a posteriori}
R_G^{(n)}(f) \approx \frac{\theta}{1+\theta} \left( H_{\theta}^{(n+1)}(f)-G^{(n)}(f) \right),    
\end{equation}
and the a-priori asymptotic estimate (\ref{bound errore}) for the modified averaged rule.
Figure \ref{fig1_weighted} refers to the Gegenbauer weight, while Figure \ref{fig2_weighted} to the Jacobi case with $\alpha\neq \beta$.
Clearly, we consider values of $\alpha,\beta,\gamma$ such that the corresponding classical anti-Gaussian formula $H_1^{(n+1)}$ is not internal.

\textcolor{black}{In these tests, as manifest in the figures, the a-posteriori estimates of the integration errors of the Gaussian rule and the a-priori estimates of the modified averaged Gaussian rule are rather sharp.}

All the Matlab codes used in this work are available as open-source software at {\cite{GITDNS}}.



\begin{figure}[ht!]
\centering
\includegraphics[scale=0.3,valign=c]{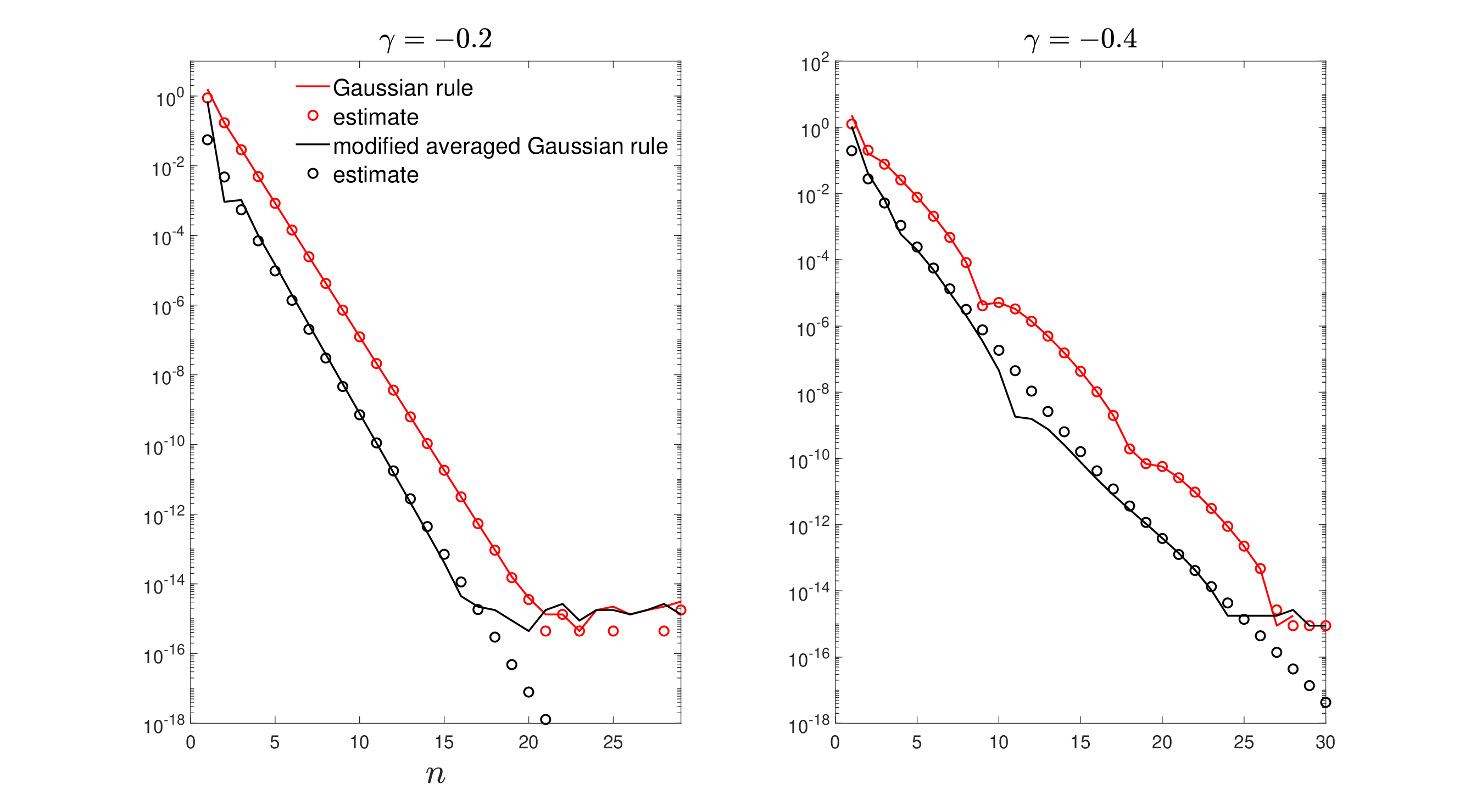} 
\caption{The error of the Gaussian rule (red line) with the a-posteriori estimate (\ref{stima a posteriori}) (red circles), and the error of the modified averaged Gaussian rule (black lines) with the a-priori estimate (\ref{bound errore}) (black circles), for the Gegenbauer weight. The functions considered are $f_1$ with $c=1$ (left) and $f_2$ with $d=5$ (right).}
\label{fig1_weighted}
\end{figure}

\begin{figure}[ht!]
\centering
\includegraphics[scale=0.3,valign=c]{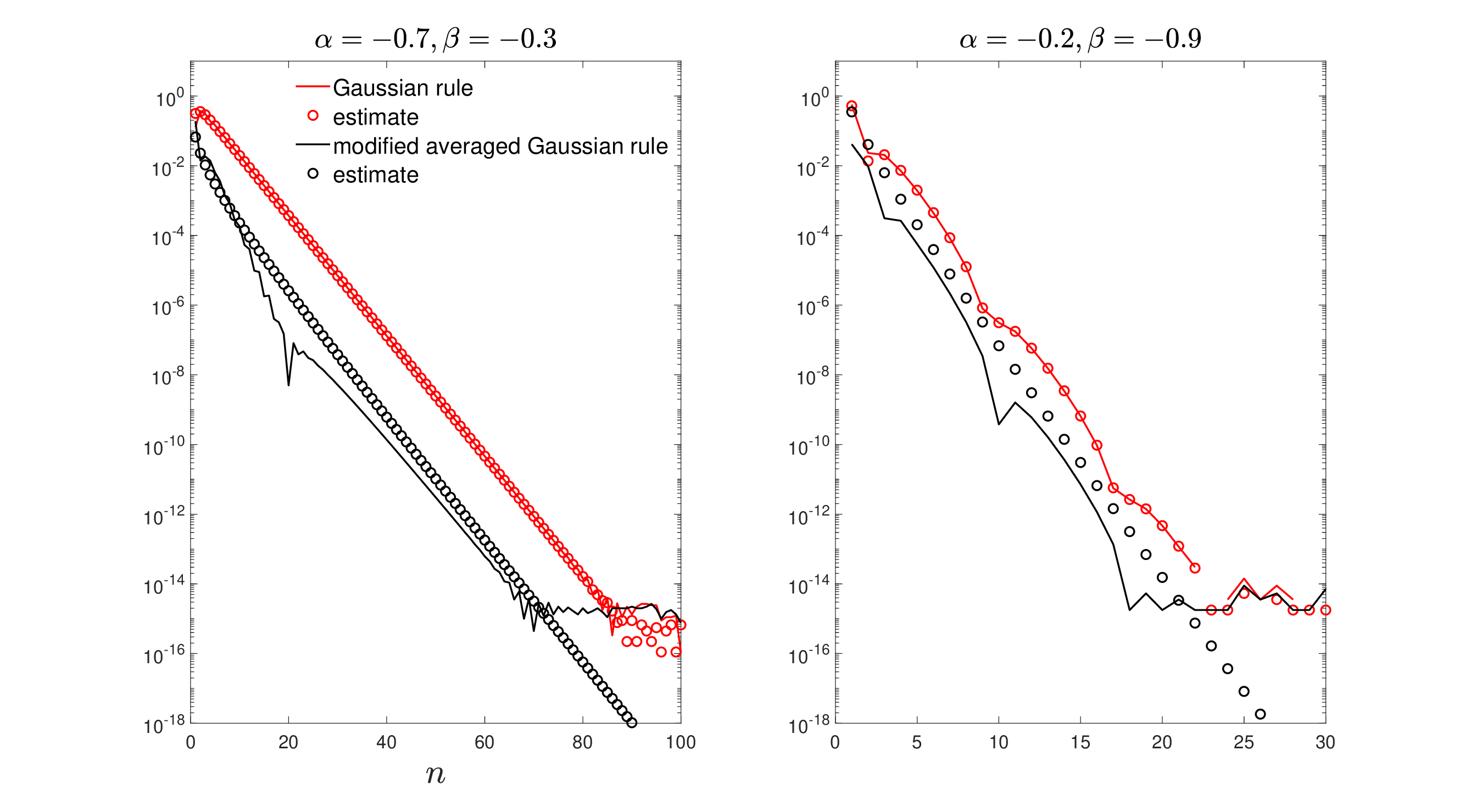} 
\caption{The error of the Gaussian rule (red line) with the a-posteriori estimate (\ref{stima a posteriori}) (red circles), and the error of the modified averaged Gaussian rule (black lines) with the a-priori estimate (\ref{bound errore}) (black circle), for the Jacobi weight. The functions considered are $f_1$ with $c=5$ (left) and $f_2$ with $d=3$ (right).}
\label{fig2_weighted}
\end{figure}

\section{Conclusions}

In this work we have presented a strategy to extend the internality of the anti-Gaussian Jacobi rule, for every couple $(\alpha,\beta)$, $\alpha,\beta >-1$.
The proper selection of the parameter $\theta=\theta(n)$ in the definition of the modified rule allows also to obtain a speed-up in the averaged formula. 
{\textcolor{black}{In the numerical section we have extended the analysis proposed in {\cite{DN26}} to the case of modified Gaussian rules with parameter $\theta(n) = \theta_{\mbox{\footnotesize{max}}}(n,\alpha,\beta)$, that is relevant when the classical averaged Gaussian rule is not of internal type, showing the advantages of this approach}}.

\vskip0.5cm 
\noindent
{\bf Acknowledgements.} 

Work partially supported by the DOR funds of the University of Padova and by the INdAM-GNCS 2026 projects. 
This research has been accomplished within the Community of Practice 
``Green Computing" of the Arqus European University Alliance, the RITA ``Research ITalian network on Approximation".


\end{document}